\newtheorem{theorem}{Theorem}
\newtheorem{lemma}[theorem]{Lemma}
\newtheorem{definition}{Definition}
\newtheorem{remark}{Remark}
\newtheorem{example}{Example}
\newcommand{\ds}{\displaystyle}
\DeclareMathOperator{\Ima}{Im}
\title{Enlarged Controllability of Riemann--Liouville Fractional Differential Equations\thanks{This 
is a preprint of a paper whose final and definite form is with 
\emph{Journal of Computational and Nonlinear Dynamics}, 
ISSN 1555-1415, eISSN 1555-1423, CODEN: JCNDDM. 
Submitted 10-Aug-2017; Revised 28-Sept-2017 and 24-Oct-2017; Accepted 05-Nov-2017.}}
\author{Touria Karite\\
\affiliation{PhD Student\\
TSI Team, MACS Laboratory\\ 
Department of Mathematics\\ 
and Computer Science\\ 
Institute of Sciences\\ 
Moulay Ismail University\\ 
Meknes, Morocco\\
Email: touria.karite@gmail.com}}
\author{Ali Boutoulout\\
\affiliation{
Full Professor\\
Coordinator of TSI Team \& MACS Laboratory}\\ 
Department of Mathematics\\ 
and Computer Science\\ 
Institute of Sciences\\ 
Moulay Ismail University\\ 
Meknes, Morocco\\
Email: boutouloutali@yahoo.fr}
\author{Delfim F. M. Torres\thanks{Corresponding author.}\\
\affiliation{ 
Full Professor, Coordinator of the Systems and Control Group\\
Center for Research \& Development in Mathematics and Applications (CIDMA)\\
Department of Mathematics, University of Aveiro, 3810-193 Aveiro, Portugal\\
Email: delfim@ua.pt}}
\begin{document}
	
\maketitle    


\begin{abstract}
{\it We investigate exact enlarged controllability
for time fractional diffusion systems of Riemann--Liouville type. 
The Hilbert uniqueness method is used to prove exact enlarged 
controllability for both cases of zone and
pointwise actuators. A penalization method is given 
and the minimum energy control is characterized.}
\end{abstract}

{\it {\bf Keywords:} Regional controllability; Time fractional diffusion processes; 
Fractional calculus; Enlarged controllability;\\ 
\indent Minimum energy; Optimal control.}

\medskip
{\it {\bf Mathematics Subject Classification 2010:} 26A33; 49J20; 93B05; 93C20.}


\section{Introduction}

The purpose of fractional calculus is to generalize 
standard derivatives into non-integer order operators. 
As well acknowledged in the literature, many dynamical 
systems are best characterized by dynamic models of fractional order, 
based on the notion of non-integer order differentiation or integration. 
The study of fractional order systems is, however, more delicate. 
Indeed, fractional systems are, on one hand, memory systems, 
notably for taking into account the initial conditions, 
and on the other hand they present much more complex dynamics
\cite{Baleanu:book,M:O:T:book,SahaRay:book}.

Since traditional computation is based on differentiation 
and integer order integration, the concept of fractional computation 
has an enormous potential to change the way we see, model, and control 
the ``nature'' around us \cite{Baleanu:book,book:Almeida:Po:To,Gujarathi:2017}. 
Several theoretical and experimental studies 
show that some electrochemical \cite{RD-JN-1997,KBO-2010}, 
thermal \cite{JB-LLL-JCB-AO-OC-2000} and viscoelastic \cite{RLB-1983,GS-2007} systems 
are governed by non-integer order differential equations. The use of classical models, 
based on a complete differentiation, is therefore not appropriate. As a result, 
models based on non-integer differential equations have been strongly developed 
in the past few decades \cite{Bal:Wu:Zeng:2017,OC-AO-EB-JLB-2000,book:Mal:To}.

To backtrack the root of fractional calculus, one needs to go back to the 
XVII century (more accurately to 1695), when L'H\^{o}pital was questioning 
Leibniz about the possible meaning of half order differentiation. 
This question has attracted the interest of many well-known mathematicians, 
including Euler, Liouville, Laplace, Riemann, Gr\"{u}nwald, Letnikov, 
and many others. Since the XIX century, the theory of fractional 
calculus developed rapidly, mostly as a foundation for a number of mathematical disciplines, 
including fractional geometry, fractional differential equations (FDE), and fractional dynamics. 
Since 1974, when the first international conference in the field took place,
fractional calculus has been intensively developed with respect to practical applications.
Applications of fractional calculus are nowadays very wide and most disciplines 
of modern engineering and science rely on tools and techniques of fractional calculus. 
For example, fruitful applications can be found in rheology, viscoelasticity, acoustics, 
optics, chemical and statistical physics, robotics, control theory, electrical 
and mechanical engineering, and bioengineering \cite{RH-2000,JC-2014,RLM-2006}.
In \cite{Rev2:01}, a fractional derivative sub-diffusion equation model 
is proposed to describe prime number distribution. Based on experimental 
results, \cite{Rev2:02} develops a fractional damping wave equation
for the acoustic propagation in a porous media. For a survey of time 
fractional diffusion models under different initial and boundary conditions 
we refer the reader to \cite{Rev2:03}.

The problem of steering a system to a target state
has a vast literature, which has essentially began with the introduction 
of the notion of actuators and sensors in the 1980s \cite{AE-AJP-1988}. 
However, in many real world applications, one is concerned with those 
cases where the target states of the studied problem 
are defined in a given subregion of the whole space domain. 
This leads to the idea of regional controllability \cite{Th:EZ:1993,AEJ:AJP:MCS:EZ:1995}.
		
Controllability problems for integer order systems have been widely 
studied and many techniques have been developed for solving such problems 
\cite{book:Lions:1971,Berg:1992}. Problems with hard constraints 
on the state or control have attracted several authors in the last three decades, 
mostly for their importance in various applications in optimal control. 
Indeed, it is well known that by proving the existence of a Lagrange multiplier 
associated with the constraints in the state, we can derive 
optimality conditions \cite{Bon:Cas:1984:85,book:Kazu:Kuni:2008}. 
For instance, Barbu and Precupanu \cite{book:Bar:Precup:2012} 
and Laseicka \cite{Laseicka:1980}, derived the existence of 
a Lagrange multiplier for some optimal control problems
with integral state constraints. Bergounioux \cite{Berg:1992} 
used a penalization method to prove existence of a multiplier 
and to derive optimality conditions for elliptic equations 
with state constraints. Here, we solve the problem of enlarged 
controllability, also so-called controllability with constraints on the state, 
using the Hilbert Uniqueness Method (HUM) 
of Lions \cite{book:Lions:1971,JLL-1988}.	

We consider a time fractional diffusion system 
where the traditional first-order time derivative is replaced 
by the Riemann--Liouville time fractional derivative, 
which can be used to well characterize sub-diffusion processes.
The problem is defined in Section~\ref{sec:2}.
Some basic knowledge of fractional calculus and some preliminary 
results for our problem are given in Section~\ref{sec:3}. Then,
in Section~\ref{sec:4}, we characterize the enlarged controllability 
of the system. Our main results on enlarged controllability are proved 
in Section~\ref{sec:5}, in two different cases: for zone and pointwise actuators.
In Section~\ref{sec:6}, we prove that the control that steers the system 
to the final state is of minimum energy. We end with Section~\ref{sec:7} 
of conclusions and some interesting open questions 
that deserve further investigations.


\section{The Time Fractional Diffusion System}
\label{sec:2}

Let $\Omega$ be an open bounded subset of $\mathbb{R}^{n}$, $n=1,2,3$, 
with a regular boundary $\partial{\Omega}$. For $T>0$, denote 
$Q=\Omega\times[0,T]$ and $\Sigma=\partial\Omega\times[0,T]$. 
We consider the following time fractional order diffusion system:
\begin{equation}
\label{sys1}
\begin{cases}
\ds {}_{0}D^{\alpha}_{t}y(x,t) 
= \mathcal{A}y(x,t) + \mathcal{B}u(t) \quad &\mbox{in}\quad Q,\\
y(\xi,t)=0  \quad &\mbox{on}\quad\Sigma,\\
\ds\lim_{t\rightarrow 0^{+}}{}_{0}I^{1-\alpha}_{t}y(x,t)
=y_{_{0}}(x) \quad &\mbox{in}\quad\Omega,
\end{cases}
\end{equation}
where ${}_{0}D^{\alpha}_{t}$ and ${}_{0}I^{1-\alpha}_{t}$ denote, respectively, 
the Riemann--Liouville fractional order derivative and integral 
with respect to time $t$. For details on these operators, see e.g. 
\cite{IP-1999,AAK-HMS-JJT-2006}. Here we just recall their definition: 
\begin{equation*}
{}_{0}I^{1-\alpha}_{t}y(x,t) = \ds\frac{1}{\Gamma(\alpha)}
\int_{0}^{t}(t-s)^{\alpha-1}y(x,s)ds
\end{equation*}
and
\begin{equation*}
{}_{0}D^{\alpha}_{t}y(x,t) = \ds\frac{\partial}{\partial t} 
{}_{0}I^{1-\alpha}_{t}y(x,t), 
\end{equation*}
where $0<\alpha<1$. The second order operator 
$\mathcal{A}$ in \eqref{sys1} is linear 
with dense domain such that the coefficients do not depend on $t$. 
It generates a $C_{0}$-semi-group $(S(t))_{_{t\geq 0}}$ 
on the Hilbert space $Y:=L^{2}(\Omega)$. We refer the reader 
to Engel and Nagel \cite{KJE-RN-2006} and Renardy and Rogers 
\cite{MR-RCR-2004} for more properties on operator $\mathcal{A}$. 
The initial datum $y_{_{0}}$ is in $Y$. The operator 
$\mathcal{B}: \mathbb{R}^{m}\rightarrow Y$ is the control 
operator, which depends on the number $m$ of actuators 
and $u\in L^{2}(0,T;\mathbb{R}^{m})$.


\section{Preliminaries}
\label{sec:3}

In this section, we recall some notions and facts needed in the sequel.
We begin with the concept of mild solution, that has been used in 
fractional calculus in several different contexts 
\cite{MR3403507,MR3172421,MR3244467}.

\begin{definition}[See, e.g., \cite{MR3316531,GF-CYQ-KC-2015,FM-PP-RG-2007}]
\label{Def1}
For any given function $f\in L^{2}(0,T;Y)$ and $\alpha\in (0,1)$, 
we say that function $g\in L^{2}(0,T;Y)$ is a mild solution of the system
\begin{equation*}
\begin{cases}
{}_{0}D^{\alpha}_{t}g(t) = \mathcal{A} g(t) + f(t), \qquad t\in[0,T],\\
\ds\lim_{t\rightarrow 0^{+}} {}_{0}I^{1-\alpha}_{t} g(t) = g_{_{0}}\in Y,
\end{cases}
\end{equation*}
if it satisfies
\begin{equation*}
g(t) = K_{\alpha}(t)g_{_{0}} 
+ \ds\int_{0}^{t}(t-s)^{\alpha-1}K_{\alpha}(t-s)f(s)ds,
\end{equation*}
where 
$$
K_{\alpha}(t) = \alpha\ds
\int_{0}^{\infty}\theta\phi_{\alpha}(\theta)S(t^{\alpha}\theta)d\theta
$$ 
with
$$
\phi_{\alpha}(\theta) = \ds\frac{1}{\alpha}
\theta^{-1-1/\alpha}\varphi_{\alpha}(\theta^{-1/\alpha})
$$ 
and $\varphi_{\alpha}$ is the probability density function defined by
\begin{equation*}
\varphi_{\alpha}(\theta) = \ds\frac{1}{\pi}
\sum_{n=1}^{\infty}(-1)^{n-1}\theta^{-\alpha n-1}\frac{\Gamma(n\alpha+1)}{n!}
\sin(n\pi\alpha),\quad \theta>0.
\end{equation*}	
\end{definition}

\begin{remark}
The probability density function satisfies
\begin{equation*}
\displaystyle\int_{0}^{\infty}\varphi_{\alpha}(\theta)d\theta = 1.
\end{equation*}  
Moreover,
$$
\int_{0}^{\infty}\theta^{v}\phi_{\alpha}(\theta)d\theta 
= \frac{\Gamma(1+v)}{\Gamma(1+\alpha v)},\quad v\geq 0\cdot
$$
\end{remark}

For results on existence and uniqueness of mild solutions
for a class of fractional neutral evolution equations with nonlocal conditions,
we refer to Zhou and Jiao \cite{YZ-FJ-2010}. Here we note
that a mild solution of system \eqref{sys1} can be written as
\begin{equation}
\label{eq8}
y(x,T;u) = T^{\alpha-1}K_{\alpha}(T)y_{_{0}}\\
+ \ds\int_{0}^{T}(T-s)^{\alpha-s}K_{\alpha}(T-s)\mathcal{B}u(s)ds.
\end{equation}

\begin{remark}
Throughout the paper, 
$y(x,t)$ is the variable of system \eqref{sys1}. 
It depends on time $t$ and space $x$.
We use $y(x,t;u)$ as the solution of system \eqref{sys1} 
when it is excited with a control $u$. 
We also denote it formally by $y(u)$.
\end{remark}

Let us define the operator $H:L^{2}(0,T;\mathbb{R}^{m})\rightarrow Y$ by
\begin{equation}
\label{eq9}
Hu = \ds\int_{0}^{T}(T-s)^{\alpha-1}K_{\alpha}(T-s)\mathcal{B}u(s)ds
\end{equation}
for all $u\in L^{2}(0,T;\mathbb{R}^{m})$
and assume that $(S^{*}(t))_{_{t\geq 0}}$ is a strongly continuous 
semi-group generated by the adjoint operator of $\mathcal{A}$ 
on the state space $Y$. Let $\langle \cdot ,\cdot\rangle$ 
be the duality pairing of space $Y$. It is easy to see that 
$\langle Hu , v\rangle  = \langle u , H^{*}v\rangle$. 
Indeed, for all $v\in Y$ one has
\begin{equation}
\label{eq:ref01}
\begin{array}{lll}
\left\langle Hu , v\right\rangle 
&= \left\langle\displaystyle\int_{0}^{T}(T-s)^{\alpha-1}K_{\alpha}(T-s)\mathcal{B}u(s)ds,v\right\rangle\\
& = \displaystyle\int_{0}^{T} \langle (T-s)^{\alpha-1}K_{\alpha}(T-s)\mathcal{B}u(s),v\rangle ds\\
& = \displaystyle\int_{0}^{T} \left\langle u(s),
\mathcal{B}^{*}(T-s)^{\alpha-1}K_{\alpha}^{*}(T-s)v\right\rangle ds\\
& = \langle u, H^{*}v \rangle.
\end{array}
\end{equation}
For any $v\in Y$, it follows from \eqref{eq:ref01} that
\begin{equation*}
H^{*}v = \mathcal{B}^{*}(T-s)^{\alpha-1}K_{\alpha}^{*}(T-s)v,
\end{equation*}
where $\mathcal{B}^{*}$ is the adjoint operator of $\mathcal{B}$, and
\[
K_{\alpha}^{*} = \alpha\ds\int_{0}^{\infty}
\theta\phi_{\alpha}(\theta)S^{*}(t^{\alpha}\theta)d\theta.
\]
Let $\omega\subset\Omega$ be a given region of positive Lebesgue 
measure. We define the restriction operator $\chi_{_{\omega}}$ 
and its adjoint $\chi^{*}_{_{\omega}}$ by 
\begin{equation*}
\begin{array}{rll}
\hspace{-1cm} \chi_{_{\omega}} : L^{2}(\Omega) & \rightarrow L^{2}(\omega)\\
y & \mapsto \chi_{_{\omega}}y = y\arrowvert_{_{\omega}}
\end{array}
\end{equation*}
and
\begin{equation*}
(\chi_{_{\omega}}^{*}y)(x) = \left\{
\begin{array}{ll}
& y(x) \qquad  x \in \omega \\
& 0 \qquad  x \in \Omega\setminus\omega.
\end{array}
\right.  
\end{equation*}

In order to prove our results, the following lemma is needed.

\begin{lemma}[See\cite{KM-2009}]
\label{lemma1}
Let the reflection operator $\mathcal{Q}$ 
on the interval $[0,T]$ be defined by
\begin{equation*}
\mathcal{Q}h(t):=h(T-t),
\end{equation*}
for some function $h$ which is differentiable 
and integrable in the Riemann--Liouville sense.
Then the following relations hold:
\begin{equation*}
\mathcal{Q}{} \, \, _{0}I^{\alpha}_{t}h(t) 
= {}_{t}I^{\alpha}_{T}\mathcal{Q}h(t), 
\qquad \mathcal{Q}{} \, \, _{0}D^{\alpha}_{t}h(t) 
= {}_{t}D^{\alpha}_{T}\mathcal{Q}h(t)
\end{equation*}
and
\begin{equation*}
{}_{0}I^{\alpha}_{t}\mathcal{Q}h(t) 
= \mathcal{Q}{}  \, \, _{t}I^{\alpha}_{T}h(t), 
\qquad {}_{0}D^{\alpha}_{t}\mathcal{Q}h(t) 
= \mathcal{Q}{} \, \, _{t}D^{\alpha}_{T}h(t).
\end{equation*}
\end{lemma}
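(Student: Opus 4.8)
The plan is to prove the four identities by reducing all of them to the first one, which follows from a single elementary change of variables, and then exploiting the fact that $\mathcal{Q}$ is an involution, i.e. $\mathcal{Q}^{2}=\mathrm{Id}$ (since $\mathcal{Q}^{2}h(t)=h(T-(T-t))=h(t)$).

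First I would establish the integral identity $\mathcal{Q}\,{}_{0}I^{\alpha}_{t}h(t)={}_{t}I^{\alpha}_{T}\mathcal{Q}h(t)$ directly from the definition of the left Riemann--Liouville integral. Writing
\[
\mathcal{Q}\,{}_{0}I^{\alpha}_{t}h(t)
=\frac{1}{\Gamma(\alpha)}\int_{0}^{T-t}(T-t-s)^{\alpha-1}h(s)\,ds,
\]
I would substitute $s=T-\tau$ (so $ds=-d\tau$, with the limits running from $\tau=T$ to $\tau=t$). This turns the kernel $(T-t-s)^{\alpha-1}$ into $(\tau-t)^{\alpha-1}$ and the integrand $h(s)$ into $h(T-\tau)=\mathcal{Q}h(\tau)$, producing exactly $\frac{1}{\Gamma(\alpha)}\int_{t}^{T}(\tau-t)^{\alpha-1}\mathcal{Q}h(\tau)\,d\tau={}_{t}I^{\alpha}_{T}\mathcal{Q}h(t)$. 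This is the computational heart of the lemma.

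Next, for the derivative identity $\mathcal{Q}\,{}_{0}D^{\alpha}_{t}h(t)={}_{t}D^{\alpha}_{T}\mathcal{Q}h(t)$, I would write ${}_{0}D^{\alpha}_{t}={\frac{d}{dt}}\,{}_{0}I^{1-\alpha}_{t}$ and set $G={}_{0}I^{1-\alpha}_{t}h$. By the chain rule, $\mathcal{Q}\,G'(t)=G'(T-t)=-\frac{d}{dt}\,G(T-t)=-\frac{d}{dt}\,\mathcal{Q}G(t)$, while the first identity applied with $1-\alpha$ in place of $\alpha$ gives $\mathcal{Q}G(t)={}_{t}I^{1-\alpha}_{T}\mathcal{Q}h(t)$. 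Combining these yields $\mathcal{Q}\,{}_{0}D^{\alpha}_{t}h(t)=-\frac{d}{dt}\,{}_{t}I^{1-\alpha}_{T}\mathcal{Q}h(t)$, which is precisely ${}_{t}D^{\alpha}_{T}\mathcal{Q}h(t)$ by the definition of the right-sided derivative. The only delicate point is matching the minus sign produced by differentiating $G(T-t)$ against the minus sign built into the definition of the right-sided Riemann--Liouville derivative; this sign bookkeeping is where the conventions must be tracked with care.

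Finally, the remaining two identities follow formally from the involution property. Applying $\mathcal{Q}$ to both sides of the first identity and using $\mathcal{Q}^{2}=\mathrm{Id}$ gives ${}_{0}I^{\alpha}_{t}h=\mathcal{Q}\,{}_{t}I^{\alpha}_{T}\mathcal{Q}h$; replacing $h$ by $\mathcal{Q}h$ then produces ${}_{0}I^{\alpha}_{t}\mathcal{Q}h(t)=\mathcal{Q}\,{}_{t}I^{\alpha}_{T}h(t)$, and the identical manipulation applied to the derivative identity yields the last relation. I do not expect a serious obstacle here: the argument is essentially a careful change of variables together with an involution trick, and the hard part is simply keeping the integration limits and the one-sided sign conventions straight throughout.
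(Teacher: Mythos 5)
Your proof is correct, but note that the paper itself offers no proof of this lemma at all: it is imported verbatim from the cited reference \cite{KM-2009} and used as a black box. So there is no argument in the paper to compare against; what you have done is supply the missing, self-contained justification. Your three steps all check out. The change of variables $s=T-\tau$ in $\frac{1}{\Gamma(\alpha)}\int_{0}^{T-t}(T-t-s)^{\alpha-1}h(s)\,ds$ does produce $\frac{1}{\Gamma(\alpha)}\int_{t}^{T}(\tau-t)^{\alpha-1}\mathcal{Q}h(\tau)\,d\tau$, which is the right-sided integral of $\mathcal{Q}h$. The derivative identity is where care is needed, and you handled it correctly: writing $G={}_{0}I^{1-\alpha}_{t}h$, the reflection gives $\mathcal{Q}G'(t)=G'(T-t)=-\frac{d}{dt}\mathcal{Q}G(t)$, and that minus sign is exactly absorbed by the convention ${}_{t}D^{\alpha}_{T}g(t)=-\frac{d}{dt}\,{}_{t}I^{1-\alpha}_{T}g(t)$ for the right-sided Riemann--Liouville derivative, so the identity closes without any stray sign. (It is worth stating that convention explicitly in a final write-up, since the paper never records it and the whole identity hinges on it.) Finally, the reduction of the second pair of identities to the first pair via $\mathcal{Q}^{2}=\mathrm{Id}$ and the substitution $h\mapsto\mathcal{Q}h$ is clean and avoids redoing any integration. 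The one refinement you might add is a brief remark on regularity: the change of variables and the interchange of $\mathcal{Q}$ with $\frac{d}{dt}$ require $h$ to be such that ${}_{0}I^{1-\alpha}_{t}h$ is differentiable, which is precisely the standing hypothesis in the lemma that $h$ is ``differentiable and integrable in the Riemann--Liouville sense,'' so nothing further is genuinely missing.
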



\section{Enlarged Controllability and Characterization}
\label{sec:4}

We define exact enlarged controllability (EEC) as follows.

\begin{definition}
\label{Def2}
Given a final time $T>0$ and a suitable functional space $G$, 
we say that system \eqref{sys1} is exact enlarged controllable if, 
for every $y_{_{0}}$ in a suitable functional space, 
there exists a control $u$ such that
\begin{equation}
\label{def-enlarg-contr}
\chi_{_{\omega}}y(\cdot,T;u)\in G.
\end{equation}
\end{definition}

\begin{remark}
The concept of exact enlarged controllability depends, obviously, on $G$.
If $G = \{0\}$, then one gets from Definition~\ref{Def2}
the classical notion of controllability.
\end{remark}

\begin{remark}
If there is exact enlarged controllability depending on $G$, 
then there exists an infinity number of controls $u$ 
that verify \eqref{def-enlarg-contr}. 
However, exact enlarged controllability relatively 
to $G$ does not implies exact controllability. 
\end{remark}

\begin{example}
Let $\omega$ be an open subset of $\Omega$ 
and let us search $u$ such that
${}_{0}D^{\alpha}_{t}y(x,T;u) = 0$ in $\omega$.
In this case, the space $G$ can be given as
$G = G_{0}\times\{\mbox{the whole domain}\}$,
where $G_{0}$ is the space of null functions in $\omega$. 
\end{example}

In our case, we take $G$ as a sub-vectorial closed space 
of $Y:=L^{2}(\Omega)$. The following result holds.

\begin{theorem}
Our system is exact enlarged controllable (i.e., system \eqref{sys1} is exactly 
$G$-controllable in $\omega$ in the sense of Definition~\ref{Def2}) if and only if 
\begin{equation}
\label{eq18}
G-\left\lbrace \chi_{_{\omega}}T^{\alpha-1}
K_{\alpha}(T)y_{_{0}}\right\rbrace 
\cap\Ima\chi_{_{\omega}}H\neq\emptyset.
\end{equation}
\end{theorem}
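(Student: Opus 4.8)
The plan is to reduce the statement to an elementary set-theoretic equivalence by unwinding Definition~\ref{Def2}. First I would rewrite the mild solution \eqref{eq8} evaluated at the final time in terms of the operator $H$ of \eqref{eq9}, so that
\[
y(\cdot,T;u) = T^{\alpha-1}K_{\alpha}(T)y_{_{0}} + Hu .
\]
Applying the restriction operator $\chi_{_{\omega}}$ and using its linearity then gives
\[
\chi_{_{\omega}}y(\cdot,T;u) = \chi_{_{\omega}}T^{\alpha-1}K_{\alpha}(T)y_{_{0}} + \chi_{_{\omega}}Hu .
\]
This identity is the only structural ingredient required: it exhibits $\chi_{_{\omega}}y(\cdot,T;u)$ as a fixed vector depending only on the initial datum $y_{_{0}}$, plus an element of the range $\Ima\chi_{_{\omega}}H$ parametrized by the control $u$.

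With this identity in hand, exact enlarged controllability, namely the existence of a control $u$ with $\chi_{_{\omega}}y(\cdot,T;u)\in G$, becomes the requirement that $\chi_{_{\omega}}Hu \in G-\left\lbrace\chi_{_{\omega}}T^{\alpha-1}K_{\alpha}(T)y_{_{0}}\right\rbrace$ for some $u$. I would then establish the two implications separately. For the direct implication, assuming such a $u$ exists, the vector $w:=\chi_{_{\omega}}Hu$ lies in $\Ima\chi_{_{\omega}}H$ by construction and in $G-\left\lbrace\chi_{_{\omega}}T^{\alpha-1}K_{\alpha}(T)y_{_{0}}\right\rbrace$ by the displayed membership, so it belongs to the intersection \eqref{eq18}, which is therefore nonempty. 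For the converse, choosing any $w$ in the intersection yields on the one hand a control $u$ with $\chi_{_{\omega}}Hu=w$, since $w\in\Ima\chi_{_{\omega}}H$, and on the other hand the relation $w+\chi_{_{\omega}}T^{\alpha-1}K_{\alpha}(T)y_{_{0}}\in G$, since $w$ lies in the translated set; combining these via the identity above gives $\chi_{_{\omega}}y(\cdot,T;u)\in G$, that is, the system is exact enlarged controllable.

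The argument is in essence a change of variables, so no serious analytic obstacle arises: the content of the theorem lies entirely in translating Definition~\ref{Def2} into this affine-image form. The one point demanding care is the set-theoretic bookkeeping, in that \eqref{eq18} must be read as $\left(G-\left\lbrace\chi_{_{\omega}}T^{\alpha-1}K_{\alpha}(T)y_{_{0}}\right\rbrace\right)\cap\Ima\chi_{_{\omega}}H\neq\emptyset$, and one should check that the translation $G-\{c\}$ is applied consistently in both directions so that the equivalence is genuinely reversible.
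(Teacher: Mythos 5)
Your proposal is correct and follows essentially the same route as the paper: both proofs unwind the mild-solution formula \eqref{eq8} through the operator $H$ of \eqref{eq9}, apply $\chi_{_{\omega}}$ to obtain the affine identity $\chi_{_{\omega}}y(\cdot,T;u)=\chi_{_{\omega}}T^{\alpha-1}K_{\alpha}(T)y_{_{0}}+\chi_{_{\omega}}Hu$, and then argue both implications by elementary set-theoretic bookkeeping with the translated set $G-\left\lbrace \chi_{_{\omega}}T^{\alpha-1}K_{\alpha}(T)y_{_{0}}\right\rbrace$ and the range $\Ima\chi_{_{\omega}}H$. If anything, your version is slightly tidier, since you state the affine identity once up front and use it in both directions, whereas the paper only invokes it explicitly in the converse.
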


\begin{proof}
Suppose \eqref{eq18} holds.
Then, there exists 
$$
z\in G-\left\lbrace
\chi_{_{\omega}}T^{\alpha-1}K_{\alpha}(T)y_{_{0}}\right\rbrace
$$ 
such that 
$z\in\Ima\chi_{_{\omega}}H$. 
So, there exists $u\in L^{2}(0,T;\mathbb{R}^{m})$
such that $z = \chi_{_{\omega}}Hu$. Hence, 
$$
z = \chi_{_{\omega}}Hu\in G-\left\lbrace
\chi_{_{\omega}}T^{\alpha-1}K_{\alpha}(T)
y_{_{0}}\right\rbrace.
$$
Therefore, $y(u)\in G$ and we have EEC in $\omega$. Conversely, assume that one has 
EEC of \eqref{sys1} in $\omega$, which means that $\chi_{_{\omega}}y(u)\in G$. 
Using \eqref{eq8} and \eqref{eq9}, we have
$$
\chi_{_{\omega}}y(u) = \chi_{_{\omega}}T^{\alpha-1}
K_{\alpha}(T)y_{_{0}} + \chi_{_{\omega}}Hu.
$$
Let us denote $w = \chi_{_{\omega}}y(u) - \chi_{_{\omega}} 
T^{\alpha-1}K_{\alpha}(T)y_{_{0}} = \chi_{_{\omega}}Hu$. 
Then, one has $w\in\Ima\chi_{_{\omega}}H$ and
$$
w\in G-\left\lbrace \chi_{_{\omega}}T^{\alpha-1}
K_{\alpha}(T)y_{_{0}}\right\rbrace.
$$ 
We just proved \eqref{eq18}.
\end{proof}


\section{Fractional HUM Approach}
\label{sec:5}

We now extend the Hilbert uniqueness method (HUM), 
introduced by Lions in \cite{JLL-1988}, to the fractional setting \eqref{sys1} 
and try to compute the (optimal) control that steers system \eqref{sys1}
into $G$. First of all, we prove under what conditions we can find 
enlarged controllability. Then we compute the (optimal) control 
that steers our system into $G$. The reader interested in the
HUM approach, in the context of fractional calculus, is referred to
\cite{GF-CYQ-KC-2016-2,MR2139969,MR1313522,MR2358729}.


\subsection{The Case of a Zone Actuator}
\label{sub-sect1}

Let us consider system \eqref{sys1} excited with a zone actuator $(D,f)$, 
where $D\subseteq\Omega$ is the support of the actuator and $f$ 
its spatial distribution. For details about actuators, we refer 
to \cite{AE-AJP-1988,GF-CYQ-KC-2016}. System \eqref{sys1} 
can be written as follows:
\begin{eqnarray}
\label{sys19}
\begin{cases}
\ds {}_{0}D^{\alpha}_{t}z(x,t) = \mathcal{A}z(x,t)  
+ \chi_{_{D}}f(x)u(t) \quad &\mbox{in}\quad Q \\
z(\xi,t)=0  \quad &\mbox{on}\quad\Sigma \\
\ds\lim_{t\rightarrow 0^{+}}{}_{0}I^{1-\alpha}_{t}z(x,t)
=z_{_{0}}(x) \quad &\mbox{in}\quad\Omega.
\end{cases}
\end{eqnarray}

The main question we answer is the following one.
Does there exist a (minimum norm) control $u\in L^{2}(0,T;\mathbb{R}^{m})$ 
such that $\chi_{_{\omega}}y(x,T;u)\in G$?

Let us introduce $G^{\circ}$ as the polar space of $G$. Then,
$\varphi_{_{0}}\in G^{\circ}$, that is, 
$\langle\varphi_{_{0}},\phi\rangle = 0$ for all $\phi\in G$,
where $\langle \cdot , \cdot \rangle$ denotes the scalar product in $Y$.
For $\varphi_{_{0}}\in G^{\circ}$, we consider the following backward system: 
\begin{eqnarray}
\label{sys20}
\begin{cases}
\mathcal{Q} \, \, \ds {}_{t}D^{\alpha}_{T}\varphi(x,t) 
= \mathcal{A}^{*}\mathcal{Q}\varphi(x,t)   \quad &\mbox{in}\quad Q \\
\varphi(\xi,t) = 0 \quad &\mbox{on}\quad \Sigma\\
\ds\lim_{t\rightarrow 0^{+}}\mathcal{Q}{} \,\, _{t}I^{1-\alpha}_{T}\varphi(x,t)
=\chi_{_{\omega}}^{*}\varphi_{_{0}}(x) \quad &\mbox{in}\quad\Omega\;,
\end{cases}
\end{eqnarray}
where $\mathcal{Q}$ is the reflection operator on the interval $[0,T]$ 
defined in Lemma~\ref{lemma1}. Hence, system \eqref{sys20} 
can be rewritten as
\begin{eqnarray}
\label{sys21}
\begin{cases}
{}_{0}D^{\alpha}_{t}\mathcal{Q} \varphi(x,t) 
= \mathcal{A}^{*}\mathcal{Q}\varphi(x,t)   
&\mbox{in}\  Q \\
\varphi(\xi,t) = 0 &\mbox{on}\ \Sigma\\
\ds\lim_{t\rightarrow 0^{+}}{}_{0}I^{1-\alpha}_{t}\mathcal{Q}
\left[ (T-t)^{1-\alpha}\varphi(x,t)\right] 
= \chi_{_{\omega}}^{*}\varphi_{_{0}}(x) &\mbox{in}\ \Omega.
\end{cases}
\end{eqnarray}
System \eqref{sys21} has a unique mild solution given by
$$
\varphi(x,t) = (T-t)^{\alpha-1}K_{\alpha}^{*}(T-t)
\chi_{_{\omega}}^{*}\varphi_{_{0}}(x).
$$
Moreover, we define the following semi-norm on $G$:
\begin{equation}
\label{eq22}
\varphi_{_{0}}\in G^{\circ} \rightarrow 
\|\varphi_{_{0}}\|_{_{G^{\circ}}}^{^{2}} 
= \ds\int_{0}^{T}\langle f,\varphi(\cdot,t)\rangle_{_{L^{2}(D)}}^{^{2}}dt.
\end{equation}

\begin{lemma}
\label{thm-norm}
Equation \eqref{eq22} defines a norm.
\end{lemma}

\begin{proof}
To prove that \eqref{eq22} is a norm, we show that
$\|\varphi_{_{0}}\|_{_{G^{\circ}}}^{^{2}} = 0
\Leftrightarrow \varphi \equiv 0$.
Writing $\|\varphi_{_{0}}\|_{_{G^{\circ}}}^{^{2}} = 0$, 
is equivalent to $\langle f,\varphi(\cdot,t)\rangle = 0$. 
It follows from the uniqueness theorem 
in \cite{EB-2012} that $\varphi\equiv 0$.	
\end{proof}

We also consider the problem:
\begin{eqnarray*}
\begin{cases}
{}_{0}D^{\alpha}_{t}\psi(x,t) 
= \mathcal{A}\psi(x,t)  + \chi_{_{D}}f(x)u(t) 
\quad &\mbox{in}\quad Q \\
\psi(\xi,t)=0  \quad &\mbox{on}\quad\Sigma \\
\ds\lim_{t\rightarrow 0^{+}}{}_{0}I^{1-\alpha}_{t}\psi(x,t)
=z_{_{0}}(x) \quad &\mbox{in}\quad\Omega.
\end{cases}
\end{eqnarray*}
We obtain $\psi$ such that
$\psi : [0,T] \rightarrow Y$
is continuous. If we find $\varphi_{_{0}}$ such that 
$\varphi_{_{0}}\in G^{\circ}$ and 
\begin{equation}
\label{eq24}
\chi_{_{\omega}}\psi(T)\in G,
\end{equation}
then
\begin{equation}
\label{eq25}
u = \langle f,\varphi(\cdot,t)\rangle_{_{L^{2}(D)}}.
\end{equation}
The control \eqref{eq25} ensures EEC and we have
$\psi = y(u)$, where $y(u)$ is a formal notation of the solution $y(x,T;u)$.
In order to easily explain \eqref{eq24} with an equation, we define
\begin{equation}
\label{eq27}
M\varphi_{_{0}} 
= \chi_{_{\omega}}\psi(T),
\end{equation}
where $M$ is an affine operator from $G^{\circ}$ 
to the orthogonal $G^{\bot}$ of $G$.
Thus, all return to solve the equation
$M\varphi_{_{0}} = 0$.
Let us decompose $M$ into a linear part and a constant one:
\begin{equation*}
M\varphi_{_{0}} = \chi_{_{\omega}}\left( \psi_{_{0}}(T) 
+ \psi_{_{1}}(T) \right), 
\end{equation*} 
where
\begin{eqnarray*}
\begin{cases}
{}_{0}D^{\alpha}_{t}\psi_{_{0}}(x,t) 
= \mathcal{A}\psi_{_{0}}(x,t)  + \chi_{_{D}}f(x)\langle 
f,\varphi(\cdot,t)\rangle_{_{L^{2}(D)}} & \mbox{in}\ Q \\
\psi_{_{0}}(\xi,t)=0  &\mbox{on}\ \Sigma \\
\ds\lim_{t\rightarrow 0^{+}}{}_{0}I^{1-\alpha}_{t}\psi_{_{0}}(x,t)
=z_{_{0}}(x) &\mbox{in}\ \Omega
\end{cases}
\end{eqnarray*}
and
\begin{eqnarray*}
\begin{cases}
{}_{0}D^{\alpha}_{t}\psi_{_{1}}(x,t) 
= \mathcal{A}\psi_{_{1}}(x,t)  \quad &\mbox{in}\quad Q \\
\psi_{_{1}}(\xi,t)=0  \quad &\mbox{on}\quad\Sigma \\
\ds\lim_{t\rightarrow 0^{+}}{}_{0}I^{1-\alpha}_{t}
\psi_{_{1}}(x,t)=z_{_{0}}(x) \quad &\mbox{in}\quad\Omega.
\end{cases}
\end{eqnarray*}
Then,
\begin{equation*}
M\varphi_{_{0}} = \chi_{_{\omega}}\psi_{_{0}}(T) 
+ \chi_{_{\omega}}\psi_{_{1}}(T).
\end{equation*}
Let $\Lambda$ be the operator given by
\begin{equation}
\label{eq33}
\begin{array}{rll}
\Lambda : G^{\circ} & \rightarrow  G^{\bot}\\
\varphi_{_{0}} & \mapsto \chi_{_{\omega}}\psi_{_{0}}(T).
\end{array}
\end{equation}
With these notations, the exact enlarged controllability 
problem leads to the solution of equation
\begin{equation}
\label{eq34}
\Lambda\varphi_{_{0}} = -\chi_{_{\omega}}\psi_{_{1}}(T).
\end{equation}
The following result holds.

\begin{theorem}
\label{thm1:eecG}
System \eqref{sys19} is exact enlarged controllable relatively to $G$.
Moreover, the control 
\begin{equation*}
u^{*}(t) = \langle f,\varphi(\cdot,t)\rangle_{_{L^{2}(D)}}
\end{equation*}
steers the system into $G$.
\end{theorem}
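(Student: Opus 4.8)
The plan is to recognize the reduced equation \eqref{eq34}, $\Lambda\varphi_{_{0}} = -\chi_{_{\omega}}\psi_{_{1}}(T)$, as a variational problem and to solve it by the Lax--Milgram theorem, exactly in the spirit of the classical HUM argument. The crucial first step is to show that the linear operator $\Lambda$ of \eqref{eq33} is the Riesz operator of the bilinear form attached to the norm \eqref{eq22}; concretely, I would establish the identity
\[
\langle\Lambda\varphi_{_{0}},\varphi_{_{0}}\rangle = \|\varphi_{_{0}}\|_{_{G^{\circ}}}^{^{2}}
\]
for every $\varphi_{_{0}}\in G^{\circ}$. To this end, observe that $\chi_{_{\omega}}\psi_{_{0}}(T) = \chi_{_{\omega}}Hu^{*}$ is the response through the operator $H$ of \eqref{eq9} to the control $u^{*}(s) = \langle f,\varphi(\cdot,s)\rangle_{_{L^{2}(D)}}$, for the zone control operator $\mathcal{B} = \chi_{_{D}}f$. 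Using the adjoint relation \eqref{eq:ref01}, namely $\langle Hu,v\rangle = \langle u,H^{*}v\rangle$ with $H^{*}v = \mathcal{B}^{*}(T-s)^{\alpha-1}K_{\alpha}^{*}(T-s)v$ and $\mathcal{B}^{*}v = \langle f,v\rangle_{_{L^{2}(D)}}$, together with the explicit backward mild solution $\varphi(\cdot,s) = (T-s)^{\alpha-1}K_{\alpha}^{*}(T-s)\chi_{_{\omega}}^{*}\varphi_{_{0}}$, one computes $H^{*}\chi_{_{\omega}}^{*}\varphi_{_{0}} = \langle f,\varphi(\cdot,\cdot)\rangle_{_{L^{2}(D)}} = u^{*}$. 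Hence $\langle\chi_{_{\omega}}\psi_{_{0}}(T),\varphi_{_{0}}\rangle = \langle u^{*},u^{*}\rangle_{_{L^{2}(0,T;\mathbb{R}^{m})}}$, which is exactly $\|\varphi_{_{0}}\|_{_{G^{\circ}}}^{^{2}}$ by \eqref{eq22}.

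With this identity in hand, the positivity and coercivity of $\Lambda$ follow from Lemma~\ref{thm-norm}, which guarantees that \eqref{eq22} is a genuine norm. I would then complete $G^{\circ}$ with respect to $\|\cdot\|_{_{G^{\circ}}}$ to obtain a Hilbert space on which the bilinear form $a(\varphi_{_{0}},\phi_{_{0}}) = \langle\Lambda\varphi_{_{0}},\phi_{_{0}}\rangle$ is bounded and coercive. The Lax--Milgram theorem then produces a unique solution $\varphi_{_{0}}$ of \eqref{eq34}, provided the constant term $-\chi_{_{\omega}}\psi_{_{1}}(T)$ defines a continuous linear functional on that completion.

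From the solution $\varphi_{_{0}}$ so obtained, the control $u^{*}(t) = \langle f,\varphi(\cdot,t)\rangle_{_{L^{2}(D)}}$ makes $M\varphi_{_{0}} = \chi_{_{\omega}}\psi_{_{0}}(T) + \chi_{_{\omega}}\psi_{_{1}}(T) = 0$ in $G^{\bot}$, which by the definition \eqref{eq27} of $M$ is equivalent to $\chi_{_{\omega}}\psi(T) = \chi_{_{\omega}}y(u^{*})(T)\in G$. This is precisely exact enlarged controllability relatively to $G$, and it simultaneously exhibits $u^{*}$ as a control steering the system into $G$, establishing both assertions of the theorem.

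The step I expect to be the main obstacle is the functional-analytic one: Lemma~\ref{thm-norm} only asserts that \eqref{eq22} is a norm, not that $G^{\circ}$ is already complete under it, so coercivity for Lax--Milgram must be secured on the completion, and one must then verify that $-\chi_{_{\omega}}\psi_{_{1}}(T)$ extends to a bounded functional there. This amounts to a continuous-dependence (trace-type) estimate for the mild solution $\psi_{_{1}}(T)$ in terms of the HUM norm, which relies on the regularizing properties of $K_{\alpha}$ and $K_{\alpha}^{*}$ and constitutes the genuinely technical part of the argument.
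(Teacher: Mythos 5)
Your proposal follows essentially the same route as the paper: the heart of the paper's proof is exactly your identity $\langle\varphi_{_{0}},\Lambda\varphi_{_{0}}\rangle = \|\varphi_{_{0}}\|_{_{G^{\circ}}}^{^{2}}$, obtained by the same adjoint computation with the explicit mild solution $\varphi(\cdot,s)=(T-s)^{\alpha-1}K_{\alpha}^{*}(T-s)\chi_{_{\omega}}^{*}\varphi_{_{0}}$, after which the paper simply invokes \cite[Theorem~1.1]{book:Lions:1971}, which is precisely the Lax--Milgram-on-the-completion step that you spell out explicitly (together with the continuity of the right-hand side $-\chi_{_{\omega}}\psi_{_{1}}(T)$). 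Your write-up is correct and, if anything, slightly more careful than the paper's: you correctly identify $\Lambda\varphi_{_{0}}$ with $\chi_{_{\omega}}\psi_{_{0}}(T)=\chi_{_{\omega}}Hu^{*}$, whereas the first displayed line of the paper's proof misprints this as $\chi_{_{\omega}}\psi_{_{1}}(T)$, and you make explicit the final passage from the solvability of \eqref{eq34} to $\chi_{_{\omega}}\psi(T)\in G$, which the paper leaves implicit.
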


\begin{proof}
From Lemma~\ref{thm-norm}, we have that \eqref{eq22} is a norm. 
Now, we show that \eqref{eq34} admits a unique solution in $G$. 
For any $\varphi_{_{0}}\in G^{\circ}$, by \eqref{eq33} it follows that
\[
\begin{array}{ll}
\langle\varphi_{_{0}},\Lambda\varphi_{_{0}}\rangle
&= \langle\varphi_{_{0}},\chi_{_{\omega}}\psi_{_{1}}(T)\rangle\\
&=  \left\langle \varphi_{_{0}},\chi_{_{\omega}}\ds
\int_{0}^{T}(T-s)^{\alpha-1}K_{\alpha}(T-s)\chi_{_{D}}f(\cdot)\langle 
f,\varphi(\cdot,s)\rangle_{_{L^{2}(D)}}ds\right\rangle \\
&= \ds\int_{0}^{T}\left\langle (T-s)^{\alpha-1}
K_{\alpha}^{*}(T-s)\chi_{_{\omega}}^{*}\varphi_{_{0}},\chi_{_{D}}f(\cdot)\langle f,\varphi(\cdot,s)\rangle_{_{L^{2}(D)}}\right\rangle ds\\
&= \ds\int_{0}^{T} \left\langle \chi_{_{\omega}}^{*}f(.)\varphi(\cdot,s), 
\langle f,\varphi(\cdot,s)\rangle_{_{L^{2}(D)}} \right\rangle ds\\
&= \ds\int_{0}^{T}\left\langle \langle 
f,\varphi(\cdot,s)\rangle_{_{L^{2}(D)}}, \langle f, 
\varphi(\cdot,s)\rangle_{_{L^{2}(D)}}\right\rangle ds\\
&= \ds\int_{0}^{T}\|\langle f,\varphi(\cdot,s)\rangle_{_{L^{2}(D)}}\|^{^{2}}ds\\
&= \|\varphi_{_{0}}\|_{_{G^{\circ}}}^{^{2}}.
\end{array}
\]
Existence of a unique solution follows from 
\cite[Theorem~1.1]{book:Lions:1971}.
\end{proof}


\subsection{The Case of a Pointwise Actuator}

Consider now system \eqref{sys1} with a pointwise internal actuator, 
which can be written in the form
\begin{eqnarray}
\label{sys36}
\begin{cases}
\ds {}_{0}D^{\alpha}_{t}y(x,t) 
= \mathcal{A}y(x,t)  + \delta(x-b)u(t) 
\quad &\mbox{in}\quad Q \\
y(\xi,t)=0  
\quad &\mbox{on}\quad\Sigma \\
\ds\lim_{t\rightarrow 0^{+}}{}_{0}I^{1-\alpha}_{t}y(x,t)
=y_{_{0}}(x) \quad &\mbox{in}\quad\Omega\;,
\end{cases}
\end{eqnarray}
where $b$ is the position of the actuator. 
For any $\varphi_{_{0}}\in G^{\circ}$, we consider 
system \eqref{sys20} and we define the following semi-norm:
\begin{equation*}
\varphi_{_{0}}\in G^{\circ} 
\rightarrow \|\varphi_{_{0}}\|_{_{G^{\circ}}}^{^{2}} 
= \ds\int_{0}^{T}\langle\varphi(b,t)\rangle_{_{L^{2}(D)}}^{^{2}}dt.
\end{equation*}
Using similar arguments as in Section~\ref{sub-sect1},
we can easily prove that this semi-norm is indeed a norm. 
Let us consider $u(t) = \varphi(b,t)$ and the following two systems:
\begin{eqnarray*}
\begin{cases}
{}_{0}D^{\alpha}_{t}\psi_{_{0}}(x,t) 
= \mathcal{A}\psi_{_{0}}(x,t) + \delta(x-b)\varphi(b,t)\quad & \mbox{in}\quad Q \\
\psi_{_{0}}(\xi,t)=0  \quad &\mbox{on}\quad\Sigma \\
\ds\lim_{t\rightarrow 0^{+}}{}_{0}I^{1-\alpha}_{t}\psi_{_{0}}(x,t)=0 
\quad &\mbox{in}\quad\Omega
\end{cases}
\end{eqnarray*}
and
\begin{eqnarray*}
\begin{cases}
{}_{0}D^{\alpha}_{t}\psi_{_{1}}(x,t) 
= \mathcal{A}\psi_{_{1}}(x,t)  \quad &\mbox{in}\quad Q\\
\psi_{_{1}}(\xi,t)=0  \quad &\mbox{on}\quad\Sigma\\
\ds\lim_{t\rightarrow 0^{+}}{}_{0}I^{1-\alpha}_{t}
\psi_{_{1}}(x,t)=y_{_{0}}(x) \quad &\mbox{in}\quad\Omega.
\end{cases}
\end{eqnarray*}
Then, the exact enlarged controllability problem 
is equivalent to solve equation
\begin{equation*}
\Lambda\varphi_{_{0}} = -\chi_{_{\omega}}\psi_{_{1}}(T).
\end{equation*}
Similarly to the proof of Theorem~\ref{thm1:eecG}, 
for the case of a zone actuator, 
we prove exact enlarged controllability
for a pointwise actuator.

\begin{theorem}
System \eqref{sys36} is exact enlarged controllable relatively to $G$. 
Moreover, the control 
\begin{equation*}
u^{*}(t) = \varphi(b,t)
\end{equation*}
steers the system into $G$.
\end{theorem}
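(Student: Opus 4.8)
The plan is to mirror exactly the argument given for the zone-actuator case in Theorem~\ref{thm1:eecG}, replacing the spatial integral against $f$ over $D$ by evaluation at the point $b$. First I would set up the norm: for $\varphi_{_{0}} \in G^{\circ}$ let $\varphi$ solve the backward system \eqref{sys20}, whose unique mild solution is $\varphi(x,t) = (T-t)^{\alpha-1}K_{\alpha}^{*}(T-t)\chi_{_{\omega}}^{*}\varphi_{_{0}}(x)$, and verify that $\|\varphi_{_{0}}\|_{_{G^{\circ}}}^{^{2}} = \int_{0}^{T}|\varphi(b,t)|^{2}\,dt$ is a genuine norm. As in Lemma~\ref{thm-norm}, this reduces to showing that $\|\varphi_{_{0}}\|_{_{G^{\circ}}} = 0$ forces $\varphi \equiv 0$: if $\varphi(b,t)=0$ for a.e.\ $t\in[0,T]$, then by the uniqueness theorem of \cite{EB-2012} the solution vanishes identically, hence $\varphi_{_{0}}=0$ on $\omega$.

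Next I would reduce the controllability question to an operator equation. Writing the candidate control as $u^{*}(t) = \varphi(b,t)$, decompose the target state $\chi_{_{\omega}}\psi(T)$ into the linear part $\chi_{_{\omega}}\psi_{_{0}}(T)$ driven by this control with zero initial data, and the free part $\chi_{_{\omega}}\psi_{_{1}}(T)$ carrying the initial datum $y_{_{0}}$. Defining $\Lambda\varphi_{_{0}} = \chi_{_{\omega}}\psi_{_{0}}(T)$ as in \eqref{eq33}, exact enlarged controllability is equivalent to solvability of $\Lambda\varphi_{_{0}} = -\chi_{_{\omega}}\psi_{_{1}}(T)$ in $G^{\bot}$.

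The core computation is then to show coercivity of $\Lambda$ with respect to the norm just constructed, by evaluating the pairing $\langle\varphi_{_{0}},\Lambda\varphi_{_{0}}\rangle$. Substituting the mild-solution formula \eqref{eq9} for $\psi_{_{0}}(T)$ with control $\varphi(b,\cdot)$ and control operator $\mathcal{B}=\delta(\cdot-b)$, one transfers $(T-s)^{\alpha-1}K_{\alpha}^{*}(T-s)\chi_{_{\omega}}^{*}\varphi_{_{0}}$ onto the other factor of the inner product exactly as in the chain of equalities in the zone-actuator proof; the adjoint $\mathcal{B}^{*}$ of the Dirac operator is point evaluation, so the integrand collapses to $|\varphi(b,s)|^{2}$ and the whole pairing equals $\int_{0}^{T}|\varphi(b,s)|^{2}\,ds = \|\varphi_{_{0}}\|_{_{G^{\circ}}}^{^{2}}$. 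Having identified the quadratic form of $\Lambda$ with the square of the norm, existence and uniqueness of a solution to $\Lambda\varphi_{_{0}} = -\chi_{_{\omega}}\psi_{_{1}}(T)$ follows from \cite[Theorem~1.1]{book:Lions:1971}, and $u^{*}(t)=\varphi(b,t)$ is the control that steers \eqref{sys36} into $G$.

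I expect the main obstacle to be the functional-analytic justification of the duality step when $\mathcal{B}=\delta(\cdot-b)$: unlike the zone case, point evaluation is not a bounded operator on $L^{2}(\Omega)$, so one must argue in an appropriate space (e.g.\ requiring $\mathcal{A}$ to generate a semigroup regularizing enough that $K_{\alpha}^{*}(T-s)\chi_{_{\omega}}^{*}\varphi_{_{0}}$ admits a well-defined trace at $b$) in order that $\varphi(b,t)$ and the pairing $\langle\delta(\cdot-b), \cdot\rangle$ make rigorous sense. The remaining steps are formally identical to the zone-actuator argument and present no new difficulty.
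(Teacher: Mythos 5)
Your proposal is correct and takes essentially the same route as the paper, whose proof of this theorem consists literally of repeating the zone-actuator argument of Theorem~\ref{thm1:eecG} with the norm $\|\varphi_{_{0}}\|_{_{G^{\circ}}}^{^{2}}=\int_{0}^{T}|\varphi(b,t)|^{2}\,dt$, the decomposition into $\psi_{_{0}}$ and $\psi_{_{1}}$, and the equation $\Lambda\varphi_{_{0}}=-\chi_{_{\omega}}\psi_{_{1}}(T)$ --- exactly the steps you spell out. Your closing caveat, that point evaluation $\mathcal{B}^{*}$ is unbounded on $L^{2}(\Omega)$ and requires a regularizing semigroup to make $\varphi(b,t)$ meaningful, identifies a genuine subtlety the paper passes over in silence, but it is a refinement of, not a departure from, the paper's argument.
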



\section{Minimum Energy Control}
\label{sec:6}

The study of fractional optimal control problems
is a subject under strong development: 
see \cite{MR3673702,MR3673710,MR3673711,MyID:387} 
and references therein. In this section, 
inspired by the results of \cite{MR2721980,MyID:181}, 
we show that the steering controls found 
in Section~\ref{sec:5} are minimizers of a suitable
optimal control problem. For that, let us consider 
the following minimization problem: 
\begin{eqnarray}
\label{PB42}
\begin{cases}
\inf \mathcal{J}(u) = \ds\frac{1}{2}
\int_{0}^{T}\|u\|_{_{U}}^{^{2}}dt\\
u\in U_{ad},	
\end{cases}
\end{eqnarray}
where 
$$
U_{ad} = \left\lbrace u\in U=L^{2}(0,T;\mathbb{R}^{m})
\, | \, \chi_{_{\omega}}y(u)\in G \right\rbrace. 
$$
The proof of our Theorem~\ref{thm:PT} is based on 
a penalization method \cite{MR3259239,MR2824729}.

\begin{theorem}
\label{thm:PT}
Assume that $U_{ad}$ is nonempty with an exact enlarged controllable
system relatively to $G$ given by \eqref{sys19} or \eqref{sys36}. 
Then the optimal control problem \eqref{PB42} 
has a unique solution given by
$u^{*}(t) = \langle f,\varphi(t)\rangle$,
in case of a zone actuator, and by
$u^{*}(t) = \varphi(b,t)$, in case of a pointwise actuator.
Such control ensures the transfer of system \eqref{sys1} 
into $G$ with a minimum energy cost in the sense of $\mathcal{J}$. 
\end{theorem}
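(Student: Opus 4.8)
The plan is to read \eqref{PB42} as the minimization of a strictly convex, coercive, lower--semicontinuous functional over a nonempty closed convex set, and then to identify the resulting minimum--norm control with the HUM control constructed in Section~\ref{sec:5}. First I would record that $\mathcal{J}(u)=\tfrac12\|u\|_{L^{2}(0,T;\mathbb{R}^{m})}^{2}$ is strictly convex and coercive on $U$, and that, because $G$ is a closed subspace of $Y$ and the map $u\mapsto\chi_{_{\omega}}y(u)=\chi_{_{\omega}}T^{\alpha-1}K_{\alpha}(T)y_{_{0}}+\chi_{_{\omega}}Hu$ is affine and continuous (with $H$ as in \eqref{eq9}), the admissible set $U_{ad}=(\chi_{_{\omega}}H)^{-1}\bigl(G-\{\chi_{_{\omega}}T^{\alpha-1}K_{\alpha}(T)y_{_{0}}\}\bigr)$ is a closed affine subspace of $U$. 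Being nonempty by hypothesis, $U_{ad}$ is a nonempty closed convex set, so the projection theorem yields existence and uniqueness of the minimizer $u^{*}$: it is the element of least norm in $U_{ad}$, equivalently the projection of $0$ onto $U_{ad}$.

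Next I would carry out the penalization step, following Bergounioux \cite{Berg:1992,MR3259239,MR2824729}. For $\varepsilon>0$ set
\[
\mathcal{J}_{\varepsilon}(u)=\tfrac12\int_{0}^{T}\|u\|^{2}\,dt
+\tfrac{1}{2\varepsilon}\bigl\|(I-P_{G})\chi_{_{\omega}}y(u)\bigr\|_{Y}^{2},
\]
where $P_{G}$ is the orthogonal projection onto $G$. Each $\mathcal{J}_{\varepsilon}$ is strictly convex and coercive on all of $U$, hence admits a unique minimizer $u_{\varepsilon}$, characterized by the Euler equation
\[
u_{\varepsilon}+\tfrac{1}{\varepsilon}(\chi_{_{\omega}}H)^{*}(I-P_{G})\chi_{_{\omega}}y(u_{\varepsilon})=0.
\]
Using the enlarged controllability hypothesis one shows that $\{u_{\varepsilon}\}$ is bounded in $U$ and that $(I-P_{G})\chi_{_{\omega}}y(u_{\varepsilon})\to 0$, so along a subsequence $u_{\varepsilon}\rightharpoonup u^{*}$ with $\chi_{_{\omega}}y(u^{*})\in G$, and a weak lower--semicontinuity argument identifies this weak limit with the constrained minimizer.

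The identification with the HUM control is where the fractional structure enters and is the step I expect to be the main obstacle. Writing $p_{\varepsilon}=-\tfrac{1}{\varepsilon}(I-P_{G})\chi_{_{\omega}}y(u_{\varepsilon})\in G^{\bot}$, the Euler equation reads $u_{\varepsilon}=(\chi_{_{\omega}}H)^{*}p_{\varepsilon}=H^{*}\chi_{_{\omega}}^{*}p_{\varepsilon}$, so in the limit $u^{*}\in\Ima(\chi_{_{\omega}}H)^{*}$ with $u^{*}=H^{*}\chi_{_{\omega}}^{*}p^{*}$ for some multiplier $p^{*}\in G^{\bot}=G^{\circ}$. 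The computation \eqref{eq:ref01} gives $H^{*}\chi_{_{\omega}}^{*}p^{*}=\mathcal{B}^{*}(T-s)^{\alpha-1}K_{\alpha}^{*}(T-s)\chi_{_{\omega}}^{*}p^{*}$, and recognizing the adjoint state $\varphi(x,t)=(T-t)^{\alpha-1}K_{\alpha}^{*}(T-t)\chi_{_{\omega}}^{*}\varphi_{_{0}}(x)$ of the backward system \eqref{sys20} with $\varphi_{_{0}}=p^{*}$, one reads off $u^{*}(t)=\mathcal{B}^{*}\varphi(\cdot,t)$. Specializing $\mathcal{B}=\chi_{_{D}}f$ (so $\mathcal{B}^{*}w=\langle f,w\rangle_{_{L^{2}(D)}}$) gives $u^{*}(t)=\langle f,\varphi(\cdot,t)\rangle_{_{L^{2}(D)}}$ for the zone actuator \eqref{sys19}, while $\mathcal{B}u=\delta(x-b)u$ (so $\mathcal{B}^{*}w=w(b)$) gives $u^{*}(t)=\varphi(b,t)$ for the pointwise actuator \eqref{sys36}. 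Matching the limiting multiplier $p^{*}=\varphi_{_{0}}$ with the datum solving $\Lambda\varphi_{_{0}}=-\chi_{_{\omega}}\psi_{_{1}}(T)$ from \eqref{eq34}, together with the coercivity $\langle\varphi_{_{0}},\Lambda\varphi_{_{0}}\rangle=\|\varphi_{_{0}}\|_{G^{\circ}}^{2}$ established in the proof of Theorem~\ref{thm1:eecG}, closes the identification and exhibits $u^{*}$ as precisely the steering control of Section~\ref{sec:5}, now characterized as the unique minimum--energy control for $\mathcal{J}$.
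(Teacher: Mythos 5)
Your proposal is correct in substance, but it takes a genuinely different route from the paper's proof, and the differences are worth spelling out. You obtain existence and uniqueness up front: $U_{ad}$ is a nonempty closed affine subspace of $U$ and $\mathcal{J}$ is strictly convex and coercive, so the minimizer is exactly the projection of $0$ onto $U_{ad}$; the paper never makes this observation, and its existence/uniqueness claim only emerges at the end of the penalization limit. Your penalization is also of the opposite kind: you keep the state equation exact (working with the mild solution $y(u)$) and penalize the terminal constraint via $\|(I-P_{G})\chi_{\omega}y(u)\|^{2}$, whereas the paper, following Bergounioux, relaxes the state equation itself --- the pair $(z,u)$ need only have an $L^{2}(Q)$ residual ${}_{0}D^{\alpha}_{t}z-\mathcal{A}z-\chi_{D}f\,u$, the terminal condition $z(T)\in G$ is kept as a hard constraint, and the multiplier $p_{\epsilon}$ is the scaled residual, which is then shown to satisfy its own fractional adjoint system with $p_{\epsilon}(T)\in G^{\circ}$. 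The trade-off: the paper's scheme produces an optimality system (coupled state--adjoint equations), but to pass to the limit it must \emph{assume} the coercivity $\int_{0}^{T}\langle p_{\epsilon},f\rangle^{2}dt\geq c\|p_{\epsilon}(T)\|^{2}_{H_{0}^{1}(\Omega)}$; your scheme is leaner and your existence/uniqueness is unconditional, but your step ``$u^{*}=H^{*}\chi_{\omega}^{*}p^{*}$ for some $p^{*}\in G^{\bot}$'' has the analogous delicate point, since $(\chi_{\omega}H)^{*}(G^{\bot})$ need not be weakly closed, so the weak limit of $u_{\varepsilon}=(\chi_{\omega}H)^{*}p_{\varepsilon}$ lies a priori only in its closure. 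You flag this as the main obstacle and close it by matching with the HUM datum $\varphi_{_{0}}$ solving \eqref{eq34}; that is the right fix, and it can in fact be made clean with no limiting multiplier at all: by Theorem~\ref{thm1:eecG} the HUM control $H^{*}\chi_{\omega}^{*}\varphi_{_{0}}$ belongs to $U_{ad}$, and since $\varphi_{_{0}}\in G^{\circ}$ it is orthogonal to the direction space $N=\{u\,:\,\chi_{\omega}Hu\in G\}$ of $U_{ad}$ (because $\langle (\chi_{\omega}H)^{*}\varphi_{_{0}},u\rangle=\langle \varphi_{_{0}},\chi_{\omega}Hu\rangle=0$ for $u\in N$), hence for every $v\in N$ one has $\|H^{*}\chi_{\omega}^{*}\varphi_{_{0}}+v\|^{2}=\|H^{*}\chi_{\omega}^{*}\varphi_{_{0}}\|^{2}+\|v\|^{2}$, so the HUM control is the minimum-norm element of $U_{ad}$ and coincides with your $u^{*}$. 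Your actuator specializations ($\mathcal{B}^{*}w=\langle f,w\rangle_{L^{2}(D)}$ for \eqref{sys19}, $\mathcal{B}^{*}w=w(b)$ for \eqref{sys36}) agree with the paper's conclusions.
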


\begin{proof}
Let $\epsilon > 0$. Suppose that there is EEC relatively to $G$ 
and consider the following problem:
\begin{eqnarray}
\label{sys43}
\begin{cases}
{}_{0}D^{\alpha}_{t}z(x,t) - \mathcal{A}z(x,t) 
- \chi_{_{D}}f(x)u(t)\:&\in L^{2}(Q) \\
z(\xi,t)=0  \quad & \mbox{on}\quad\Sigma \\
\ds\lim_{t\rightarrow 0^{+}}{}_{0}I^{1-\alpha}_{t}z(x,t)=0 
\quad &\mbox{in}\quad\Omega\\
z(T,u)\in G.
\end{cases}
\end{eqnarray}
The set $S$ of pairs $(z,u)$ verifying \eqref{sys43} is nonempty. 
Consider the penalized problem of \eqref{PB42} given by
\begin{equation}
\label{PB44}
\begin{cases}
\inf \mathcal{J}_{\epsilon}(u,z)\\
(u,z)\in S,
\end{cases}
\end{equation}
where
\begin{equation*}
\mathcal{J}_{\epsilon} = \ds\frac{1}{2}\int_{0}^{T}u^{2}(t)dt\\
+ \frac{1}{2\epsilon}\int_{Q}\left( {}_{0}D^{\alpha}_{t}z(x,t)
-\mathcal{A}z(x,t)-\chi_{_{D}}f(x)u(t)\right) ^{2}dQ.
\end{equation*}
Let $\{u_{\epsilon},z_{\epsilon}\}$ be the solution of \eqref{PB44} 
and let us define
$$
p_{\epsilon} = -\ds\frac{1}{\epsilon}\left( 
{}_{0}D^{\alpha}_{t}z_{\epsilon}(x,t)
-\mathcal{A}z_{\epsilon}(x,t)
-\chi_{_{D}}f(x)u_{\epsilon}(t)\right).
$$
Because we did assume that $U_{ad}$ is nonempty, we have
\begin{equation*}
0<\mathcal{J}_{\epsilon}(u_{\epsilon},z_{\epsilon}) 
= \inf\mathcal{J}_{\epsilon}(u,z) < \inf\mathcal{J}_{\epsilon}(u)<\infty
\quad \mbox{ for }\: u\in U_{ad}, 
\end{equation*}
where $\mathcal{J}_{\epsilon}(u) = \ds\frac{1}{2}\int_{0}^{T} u^{2}(t)dt$.
Therefore, 
\begin{eqnarray}
\label{cond-1}
\begin{cases}
\|u_{\epsilon}\|\leq C,\\
\|{}_{0}D^{\alpha}_{t}z(x,t) - \mathcal{A}z(x,t) 
- \chi_{_{D}}f(x)u(t)\|\leq C\sqrt{\epsilon},
\end{cases}
\end{eqnarray}
where $C$ represents various positive constants independent 
of $\epsilon$. It follows from \eqref{cond-1} that
\begin{equation*}
\|{}_{0}D^{\alpha}_{t}z(x,t) - \mathcal{A}z(x,t)\|
\leq C(1+\sqrt{\epsilon}).
\end{equation*} 
Hence, when $\epsilon\rightarrow 0$, we have that $u_{\epsilon}$ 
is bounded and we can extract a sequence such that
$$
\begin{array}{cl}
u_{\epsilon}\rightharpoonup\tilde{u}
\quad &\mbox{weakly in} \quad U\\
z_{\epsilon}\rightharpoonup z
\quad &\mbox{weakly in} \quad L^{2}(Q).
\end{array}
$$
Using the semi-continuity of $\mathcal{J}$, one has
$$
\mathcal{J}(u^{*}) \leq \lim\inf\mathcal{J}_{\epsilon}(u_{\epsilon}) 
\leq \lim\inf\mathcal{J}_{\epsilon}(u_{\epsilon},z_{\epsilon}).
$$
Then,
\begin{equation*}
\mathcal{J}(u^{*}) 
= \inf\mathcal{J}(u), 
\quad u \in U_{ad},
\end{equation*}
and $u^{*} = \tilde{u}$. With respect 
to problem \eqref{PB44}, we have  
\begin{equation*}
\int_{0}^{T}u_{\epsilon}(t)u(t)dt + \int_{Q}\left\langle
p_{\epsilon},{}_{0}D_{t}^{\alpha}\eta(x,t)-\mathcal{A}\eta(x,t)\right\rangle dQ\\ 
= -\int_{Q}\langle p_{\epsilon},f(x)\rangle u(t)dQ.
\end{equation*}
For $u\in U_{ad}$ and $\eta$ such that
\begin{eqnarray*}
\begin{cases}
{}_{0}D^{\alpha}_{t}\eta(x,t) - \mathcal{A}\eta(x,t)  
= \chi_{_{D}}f(x)u(t) \quad &\mbox{in}\quad Q \\
\eta(\xi,t)=0  \quad &\mbox{on}\quad\Sigma \\
\ds\lim_{t\rightarrow 0^{+}}{}_{0}I^{1-\alpha}_{t}\eta(x,t)=0 
\quad &\mbox{in}\quad\Omega\\
\eta(T)\in G\,,
\end{cases}
\end{eqnarray*}
we deduce that $p_{\epsilon}$ verifies
\begin{eqnarray*}
\begin{cases}
{}_{0}D^{\alpha}_{t}p_{\epsilon}(x,t) - \mathcal{A}p_{\epsilon}(x,t)  
= \chi_{_{D}}f(x)\langle p_{\epsilon},f\rangle_{_{L^{2}(D)}} 
\quad &\mbox{in}\quad Q \\
p_{\epsilon}(\xi,t)=0  \quad &\mbox{on}\quad\Sigma \\
\ds\lim_{t\rightarrow 0^{+}}{}_{0}I^{1-\alpha}_{t}p_{\epsilon}(x,t)=0 
\quad &\mbox{in}\quad\Omega
\end{cases}
\end{eqnarray*}
with $\langle\eta(T),p_{\epsilon}(T)\rangle = 0$ for all $\eta$ 
such that $\eta(T)\in G$. Then, $p_{\epsilon}(T)\in G^{\circ}$. 
If we suppose that 
$$
\int_{0}^{T}\langle p_{\epsilon},f\rangle^{^{2}} dt 
\geq c\|p_{\epsilon}(T)\|_{H_{0}^{1}(\Omega)}^{^{2}},
$$
then we can switch to the limit when $\epsilon\rightarrow 0$. 
Moreover, if we have exact enlarged controllability relatively 
to $G$, then 
\begin{eqnarray*}
\begin{cases}
{}_{0}D^{\alpha}_{t}z(x,t) - \mathcal{A}z(x,t)  
= \chi_{_{D}}f(x)u(t) \quad &\mbox{in}\quad Q \\
z(\xi,t)=0  \quad &\mbox{on}\quad\Sigma \\
\ds\lim_{t\rightarrow 0^{+}}{}_{0}I^{1-\alpha}_{t}z(x,t)
=z_{_{0}}(x) \quad &\mbox{in}\quad\Omega\\
{}_{0}D^{\alpha}_{t}p(x,t) - \mathcal{A}p(x,t) 
= \chi_{_{D}}f(x)\langle p,f\rangle_{_{L^{2}(D)}} 
\quad &\mbox{in}\quad Q \\
p(\xi,t)=0  \quad &\mbox{on}\quad\Sigma\\
p(T)\in G^{\circ}.
\end{cases}
\end{eqnarray*}
Thus, we take $p(T)\in G^{\circ}$ and we introduce the  
solution $\varphi$ of \eqref{sys21}. Then, $\psi = z$ 
if $\psi(T)\in G$, which proves that \eqref{eq27} 
has a unique solution for $\varphi_{_{0}}\in G^{\circ}$.
\end{proof}


\section{Conclusions}
\label{sec:7}

This paper deals with the notion of regional 
exact enlarged controllability for Riemann--Liouville 
time fractional diffusion systems.
Our results extend the ones in
\cite{GF-CYQ-KC-2016-2,GF-CYQ-KC-IP-2016,GF-CYQ-KC-2016-3}. 
They can be extended to complex fractional-order 
distributed parameter dynamic systems. Other difficult 
questions are still open and deserving further investigations,
e.g., the problem of boundary enlarged controllability 
for fractional systems; and the problem of gradient enlarged 
controllability/observability for fractional order 
distributed parameter systems. These and other questions,
as to give numerical results and a real application 
to support our theoretical analysis, are being considered 
and will be addressed elsewhere.


\begin{acknowledgment}
This research is part of first author's Ph.D., which is carried out 
at Moulay Ismail University, Meknes. It was essentially finished during 
a one-month visit of Karite to the Department of Mathematics 
of University of Aveiro, Portugal, June 2017. The hospitality of the 
host institution and the financial support of Moulay Ismail University,
Morocco, and CIDMA, Portugal, are here gratefully acknowledged. 
Boutoulout was supported by Hassan II Academy of Science and Technology;
Torres by Portuguese funds through CIDMA and FCT, within project UID/MAT/04106/2013. 
The authors are very grateful to two anonymous referees, 
for their suggestions and invaluable comments.
\end{acknowledgment}




\begin{thebibliography}{99}

\bibitem{Baleanu:book}
Baleanu, D., Diethelm, K., Scalas, E., Trujillo, J. J.: 
Fractional Calculus, 
World Scientific Publishing Co. Pte. Ltd., Hackensack, NJ (2017)

\bibitem{M:O:T:book}
Malinowska, A. B., Odzijewicz, T., Torres, D. F. M.:
Advanced Methods in the Fractional Calculus of Variations,
SpringerBriefs in Applied Sciences and Technology, Springer, Cham (2015)

\bibitem{SahaRay:book}
Saha Ray, S.: 
Fractional Calculus with Applications for Nuclear Reactor Dynamics, 
CRC Press, Boca Raton, FL (2016)

\bibitem{book:Almeida:Po:To}
Almeida, R., Pooseh, S., Torres, D. F. M.:
Computational Methods in the Fractional Calculus of Variations, 
Imperial College Press, London (2015)

\bibitem{Gujarathi:2017}
Gujarathi, A. M., Babu, B. V.:
Evolutionary Computation,
Apple Academic Press, Oakville (2017)

\bibitem{RD-JN-1997}
Darling, R., Newman, J.: 
On the short-time behavior of porous intercalation electrodes. 
Journal of the Electrochemical Society 144, no.~9, 3057--3063 (1997)

\bibitem{KBO-2010}
Oldham, K. B.: 
Fractional differential equations in electrochemistry. 
Advances in Engineering Software 41, no.~1, 9--12 (2010)	

\bibitem{JB-LLL-JCB-AO-OC-2000}
Battaglia, J. L., Batsale, J. Ch., Le Lay, L., Oustaloup, A., Cois, O.:
Heat flux estimation through inverted non-integer identification models.
International Journal of Thermal Sciences 39, no.~3, 374--389 (2000)

\bibitem{RLB-1983} 
Bagley, R. L., Torvik, P. J.: 
A theoretical basis for the application of fractional calculus to viscoelasticity. 
Journal of Rheology 27, no.~3, 201--210 (1983)

\bibitem{GS-2007}
Catania, G., Sorrentino, S.:
Analytical modelling and experimental identification of viscoelastic mechanical systems. 
Advances in Fractional Calculus, 403--416, Springer, Dordrecht (2007)

\bibitem{Bal:Wu:Zeng:2017}
Baleanu, D., Wu, G.-C., Zeng, S.-D.: 
Chaos analysis and asymptotic stability of generalized Caputo 
fractional differential equations. 
Chaos Solitons Fractals 102, 99--105 (2017)

\bibitem{OC-AO-EB-JLB-2000} 
Cois, O., Oustaloup, A., Battaglia, E., Battaglia, J.-L.: 
Non integer model from modal decomposition for time domain system identification. 
IFAC Proceedings 33, no.~15, 989--994 (2000)

\bibitem{book:Mal:To}
Malinowska, A. B., Torres, D. F. M.:  
Introduction to the Fractional Calculus of Variations, 
Imperial College Press, London (2012)

\bibitem{RH-2000} 
Hilfer, R.: 
Applications of Fractional Calculus in Physics,
World Scientific Publishing, Singapore (2000)

\bibitem{JC-2014}
Cresson, J.: 
Fractional Calculus in Analysis, Dynamics and Optimal Control,
Nova Science Pub Inc, New York (2014)

\bibitem{RLM-2006}
Magin, R. L.: 
Fractional Calculus in Bioengineering,
Begell House Inc. Publishers (2006)

\bibitem{Rev2:01}
Chen, W., Liang, Y., Hu, S., Sun, H.:
Fractional derivative anomalous diffusion equation modeling prime number distribution, 
Fract. Calc. Appl. Anal. 18, no.~3, 789--798 (2015)

\bibitem{Rev2:02}
Chen, W., Hu, S., Cai, W.:
A causal fractional derivative model for acoustic wave propagation in lossy media,
Arch. Appl. Mech. 86, no.~3, 529--539 (2016)

\bibitem{Rev2:03}
Wei, S., Chen, W., Hon, Y. C.:
Characterizing time dependent anomalous diffusion process: 
a survey on fractional derivative and nonlinear models, 
Phys. A 462, 1244--1251 (2016)

\bibitem{AE-AJP-1988}
El Jai, A., Pritchard, A. J.: 
Sensors and Actuators in Distributed Systems Analysis, 
Wiley, New York (1988)

\bibitem{Th:EZ:1993}
Zerrik, E.:
Regional analysis of distributed parameter systems,
PhD Thesis, University of Rabat, Morocco (1993)

\bibitem{AEJ:AJP:MCS:EZ:1995}
El Jai, A., Pritchard, A. J., Simon, M. C., Zerrik, E.:
Regional controllability of distributed systems,
International Journal of Control 62, 1351--1365 (1995)

\bibitem{book:Lions:1971}
Lions, J. L.:
Optimal Control of Systems Governed Partial Differential Equations, 
Springer, New York (1971)

\bibitem{Berg:1992} 
Bergounioux, M.: 
A penalization method for optimal control of elliptic problems with state constraints, 
SIAM J. Control Optim. 30, no.~2, 305--323 (1992)

\bibitem{Bon:Cas:1984:85}
Bonnans, J. F., Casas, E.:
On the choice of the function space for some state constrained control problems, 
Numer. Funct. Anal. Optim. 4, 333--348 (1984--1985)

\bibitem{book:Kazu:Kuni:2008}
Kazufumi, I., Kunisch, K.:
Lagrange Multiplier Approach to Variational Problems and Applications,
SIAM, Philadelphia (2008)

\bibitem{book:Bar:Precup:2012}
Barbu, V., Precupanu, T.:
Convexity and Optimization in Banach Spaces, 
Springer Netherlands (2012)

\bibitem{Laseicka:1980}
Lasiecka, I.:
State constrained control problems for parabolic systems: regularity of optimal solutions, 
Appl. Math. Optim. 6, 1--29 (1980)

\bibitem{JLL-1988}
Lions, J. L.: 
Contr\^{o}labilit\'{e} Exacte, Perturbations 
et Stabilisation de Syst\`{e}mes Distribu\'{e}s, 
Masson, Paris (1988)
	
\bibitem{IP-1999}
Podlubny, I.: 
Fractional Differential Equations, 
San Diego, CA: Academic Press (1999)

\bibitem{AAK-HMS-JJT-2006}
Kilbas, A. A., Srivastava, H. M., Trujillo, J. J.: 
Theory and Applications of Fractional Differential Equations, 
Elsevier Science B.V., Amsterdam (2006) 

\bibitem{KJE-RN-2006}
Engel, K.-J., Nagel, R.: 
A Short Course on Operator Semigroups,
Springer, New York (2006)

\bibitem{MR-RCR-2004}
Renardy, M., Rogers, R. C.: 
An Introduction to Partial Differential Equations, 
Springer-Verlag, New York (2004) 

\bibitem{MR3403507} 
Baleanu, D., Jafari, H., Khan, H., Johnston, S. J.:
Results for mild solution of fractional coupled hybrid boundary value problems. 
Open Math. 13, 601--608 (2015)

\bibitem{MR3172421} 
Debbouche, A., Torres, D. F. M.: 
Approximate controllability of fractional nonlocal 
delay semilinear systems in Hilbert spaces. 
Internat. J. Control 86, no.~9, 1577--1585 (2013)
{\tt arXiv:1304.0082}

\bibitem{MR3244467} 
Debbouche, A., Torres, D. F. M.:  
Approximate controllability of fractional delay dynamic 
inclusions with nonlocal control conditions. 
Appl. Math. Comput. 243, 161--175 (2014)
{\tt arXiv:1405.6591}

\bibitem{MR3316531} 
Debbouche, A., Torres, D. F. M.:  
Sobolev type fractional dynamic equations and optimal 
multi-integral controls with fractional nonlocal conditions. 
Fract. Calc. Appl. Anal. 18, no.~1, 95--121 (2015)
{\tt arXiv:1409.6028}

\bibitem{GF-CYQ-KC-2015}
Ge, F., Chen, Y. Q., Kou, C.: 
Regional controllability of anomalous diffusion generated by the time fractional diffusion equations. 
ASME IDETC/CIE 2015, Boston, August 2-5, Paper No. DETC2015-46697, pp.~V009T07A031; 8~pp (2015)

\bibitem{FM-PP-RG-2007}
Mainardi, F., Paradisi, P., Gorenflo, R.: 
Probability distributions generated by fractional diffusion equations. 
arXiv:0704.0320 (2007)

\bibitem{YZ-FJ-2010}
Zhou, Y., Jiao, F.: 
Existence of mild solutions for fractional neutral evolution equations. 
Comput. Math. Appl. 59, no.~3, 1063--1077 (2010)

\bibitem{KM-2009}
Ma\l{}gorzata, K.: 
On Solutions of Linear Fractional Differential Equations of a Variational Type,
Czestochowa University of Technology, Czestochowa (2009)

\bibitem{GF-CYQ-KC-2016-2}
Ge, F., Chen, Y. Q., Kou, C.: 
Regional gradient controllability of sub-diffusion processes. 
J. Math. Anal. Appl. 440, 865--884 (2016)

\bibitem{MR2139969} 
Bodian, K., Sene, A., Niane, M. T.:  
Exact controllability of the wave equation in fractional order spaces. 
C. R. Math. Acad. Sci. Soc. R. Can. 27, no.~1, 2--7 (2005)

\bibitem{MR1313522} 
Lagnese, J. E.: 
Modeling and controllability of interconnected elastic membranes. 
Control and estimation of distributed parameter systems: nonlinear phenomena (Vorau, 1993), 281--299, 
Internat. Ser. Numer. Math., 118, Birkh\"{a}user, Basel (1994) 

\bibitem{MR2358729} 
S\`{e}ne, A.:
Exact boundary controllability for the semilinear wave equation in fractional order spaces. 
Glob. J. Pure Appl. Math. 3, no.~2, 167--173 (2007)

\bibitem{GF-CYQ-KC-2016}
Ge, F., Chen, Y. Q., Kou, C.: 
Actuator characterisations to achieve approximate controllability 
for a class of fractional sub-diffusion equations, 
Internat. J. Control 90, no.~6, 1212--1220 (2017)

\bibitem{EB-2012}
Bazhlekova, E.: 
Existence and uniqueness results for a fractional evolution equation in Hilbert space. 
Fract. Calc. Appl. Anal. 15, no.~2, 232--243 (2012) 

\bibitem{MR3673702}
Debbouche, A., Nieto, J. J., Torres, D. F. M.:  
Optimal solutions to relaxation in multiple control problems of Sobolev 
type with nonlocal nonlinear fractional differential equations. 
J. Optim. Theory Appl. 174, no.~1, 7--31 (2017)
{\tt arXiv:1504.05153}

\bibitem{MR3673710}
Jahanshahi, S., Torres, D. F. M.:  
A simple accurate method for solving fractional variational and optimal control problems. 
J. Optim. Theory Appl. 174, no.~1, 156--175 (2017) 
{\tt arXiv:1601.06416}

\bibitem{MR3673711}
Mophou, G.: 
Optimal control for fractional diffusion equations with incomplete data. 
J. Optim. Theory Appl. 174, no.~1, 176--196 (2017)

\bibitem{MyID:387}
Agarwal, R. P., Baleanu, D., Nieto, J. J., Torres, D. F. M., Zhou, Y.:
A survey on fuzzy fractional differential and optimal control nonlocal evolution equations.
J. Comput. Appl. Math., in press.
DOI:10.1016/j.cam.2017.09.039
{\tt arXiv:1709.07766}

\bibitem{MR2721980} 
Mozyrska, D., Torres, D. F. M.: 
Minimal modified energy control for fractional linear control systems with the Caputo derivative. 
Carpathian J. Math. 26, no.~2, 210--221 (2010)
{\tt arXiv:1004.3113}

\bibitem{MyID:181}
Mozyrska, D., Torres, D. F. M.:  
Modified optimal energy and initial memory of fractional continuous-time linear systems.
Signal Process. 91, no.~3, 379--385 (2011)
{\tt arXiv:1007.3946}

\bibitem{MR3259239} 
Benharrat, M., Torres, D. F. M.: 
Optimal control with time delays via the penalty method. 
Math. Probl. Eng. 2014, Art. ID 250419, 9~pp (2014) 
{\tt arXiv:1407.5168}

\bibitem{MR2824729} 
Mophou, G., N'Gu\'{e}r\'{e}kata, G. M.:  
Optimal control of a fractional diffusion equation with state constraints. 
Comput. Math. Appl. 62, no.~3, 1413--1426 (2011)

\bibitem{GF-CYQ-KC-IP-2016}
Ge, F., Chen, Y. Q., Kou, C., Podlubny, I.: 
On the regional controllability of the sub-diffusion process with Caputo fractional derivative. 
Fract. Calc. Appl. Anal. 19, no.~5, 1262--1281 (2016) 

\bibitem{GF-CYQ-KC-2016-3}
Ge, F., Chen, Y. Q., Kou, C.: 
Regional boundary controllability of time fractional diffusion processes. 
IMA J. Math. Control Inform. 34, no.~3, 871--888 (2017)

\end{thebibliography}
\end{document}